\newtheorem{theorem}{\quad Theorem}[section]
\newtheorem{definition}[theorem]{\quad Definition}
\newtheorem{corollary}[theorem]{\quad Corollary}
\newtheorem{lemma}[theorem]{\quad Lemma}
\newtheorem{proposition}[theorem]{\quad Proposition}
\newtheorem{thm}{Theorem}[section]
\newtheorem{deff}[thm]{Definition}
\title{On a shape derivative formula for the Robin $p$-Laplace eigenvalue}
\author{Ardra A \footnote{The author was supported by the Department of Science and Technology INSPIRE Fellowship.}\;\and Mohan Mallick \and Sarath Sasi \footnote{Corresponding author}}
\begin{document}
	\maketitle
	
	\begin{abstract}
		We obtain shape derivative formulae for the first eigenvalue of the Robin $p$-Laplace operator. This result is used to study the variation of the first eigenvalue with respect to perturbations of the domain. In particular, we prove that for large values of the boundary parameter, the first eigenvalue is monotonic with respect to domain inclusion for smooth domains. 
	\end{abstract}
 \noindent
 {\bf Mathematics Subject Classification (2010):} { 35J92, 35P30,  	35P99, 49R05}.\\
 {\bf Keywords:}
 $p$-Laplacian, eigenvalue problem, Robin boundary condition, shape derivative,  Hadamard's perturbations,  domain monotonicity
 
	\section{Introduction}\label{Introduction section}

	Consider the following eigenvalue problem with the Robin boundary condition:
	\begin{equation}
	\label{Robin p-Laplacian}
	\left\{
	\begin{split}
	\Delta_p u+\lambda  |u|^{p-2}u&=0\quad \rm{in} ~\Omega,\\
	|\nabla u|^{p-2}\frac{\partial u}{\partial\eta}+\beta|u|^{p-2}u&=0\quad\rm{on}~\partial\Omega,
	\end{split}
	\right.
	\end{equation}
	where $\Delta_p u = \mbox{div} (|\nabla u|^{p-2} \nabla u)$ is the $p$-Laplace operator for $1<p<\infty,$ $ \Omega $ is a bounded domain in $\mathbb{R}^n$ with smooth boundary, $ \eta $ denotes the outward unit normal, and $ \beta $ is a positive real constant. The Dirichlet and Neumann boundary conditions can be considered as special cases of the Robin boundary condition with $\beta $ going to infinity and $\beta = 0, $ respectively (see \cite{le2006eigenvalue}). In the case when $p=2$, problem (\ref{Robin p-Laplacian}) reduces to the standard Robin Laplace eigenvalue problem.   
                      	
    \begin{definition} \rm{\cite{le2006eigenvalue}
		Any $(u,\lambda)\in W^{1,p}(\Omega) \times \mathbb{R}$ satisfying $$ \int_{\Omega} |\nabla u |^{p-2}\nabla u \cdot \nabla v ~dx + \int_{\partial \Omega} \beta |u|^{p-2}uv ~ds = \lambda\int_{\Omega} |u|^{p-2}uv ~dx$$ for all $v\in W^{1,p}(\Omega)$ is said to be a weak solution of (\ref{Robin p-Laplacian}).} 
	\end{definition}
	
	If $(u,\lambda)$ is a weak solution of (\ref{Robin p-Laplacian}) with $u \neq 0$, then $\lambda$ is known as an eigenvalue, and $u$ is a corresponding eigenfunction. We can use Ljusternik–Schnirelman (L-S) theory to prove the existence  of a sequence of eigenvalues $\{\lambda_m\}$ such that $0 \leq\lambda_1 \leq\lambda_2\leq ... \leq \lambda_m \leq...,$ with $\lambda_m\longrightarrow\infty$ as $m\longrightarrow\infty.$ It is not known whether this exhausts the spectrum when $p\neq 2,$ but it is known that the set of eigenvalues is closed (see \cite{le2006eigenvalue}).
	
Consider the functional $J(z)= \frac{\int_{\Omega} |\nabla z |^p ~dx + \beta \int_{\partial \Omega}  |z|^p ~ds}  {\int_{\Omega}  |z|^p ~dx}$ defined on $W^{1,p}(\Omega)$. The first eigenvalue $\lambda_1$ of \eqref{Robin p-Laplacian} is given by the characterization 
\begin{equation}
\label{Rayleigh}
\lambda_1(\Omega)  = \displaystyle{\inf_{\substack{ z \in W^{1,p}(\Omega) \\ z \neq 0}}} J(z).
\end{equation}
In \cite{le2006eigenvalue}, the authors proved that if $\Omega $ has a $C^2$ boundary, then $\lambda_1(\Omega)$ is simple and isolated. Moreover, the eigenfunctions associated with all eigenvalues except $\lambda_1(\Omega)$ change sign, and there exists a positive constant $\alpha$ such that $u \in C^{1,\alpha}(\overline{\Omega})$ (see \cite{le2006eigenvalue} for more details). Even in the case $p=2,$  many basic properties of $\lambda_1(\Omega)$ are not well-understood. For instance, in the case of the Dirichlet Laplacian, it is known that if $\Omega_1 \subseteq \Omega_2,$ then the first Dirichlet eigenvalue $\lambda_1^D(\Omega)$ satisfies $\lambda_1^D(\Omega_2) \leq \lambda_1^D(\Omega_1).$ However, this is not true in general for the Robin eigenvalue problem (see \cite{giorgi2005monotonicity} for a counterexample), and the domain monotonicity property is not well understood. This is discussed in more detail later in this section. 
	
There have been many studies on the domain dependence of energy functionals associated with $p$-Laplace differential equations with Dirichlet boundary  conditions (see \cite{melian2001perturbation}, \cite{ii2001placement}, \cite{bobkov2020qualitative}, \cite{lamberti2003differentiability}, and \cite{roppongi1994hadamard}).
For smooth domains, the standard technique used  in many of these studies is Hadamard's method of variations. In this technique, we consider a family of diffeomorphisms
$T_t\in \mathcal{C}^1(\Omega, \mathbb{R}^n)$,  $|t|\in \mathbb{R}$ sufficiently small. We define
$$T_t(x)=x+tv(x)+o(t),$$
where  $v=(v_1(x),v_2(x),..., v_n(x)) $ is a $C^1(\overline{\Omega})$ vector field and $o(t)$ denotes all the terms with $\frac{o(t)}{t}\to 0$ as $t\to 0.$
Now letting $\Omega_t=T_t(\Omega)$,  the perturbation of energy functionals can be studied by analyzing the dependence of the functionals on $\Omega_t$ with respect to $t$. 
Using this approach, the domain derivative for the first eigenvalue of the Dirichlet $p$-Laplacian  was obtained by Garc\'{i}a Meli\'{a}n and Sabina De Lis in \cite{melian2001perturbation}. They prove that for bounded $\mathcal{C}^{2,\alpha}$ domains $\lambda^D_1(t):=\lambda^D_1(\Omega_t)$ is differentiable with respect to $t$ at $0,$ and the derivative is given by
\begin{equation}
\label{GarciaMelianFormula}
\dot{\lambda}^{D}_1(0)=-(p-1) \int_{\partial \Omega}\abs{\frac{\partial u}{\partial \eta}}^p(v \cdot \eta)~ds,
\end{equation} where $u$ is the positive normalized Dirichlet eigenfunction so that $\int_{\Omega}  |u|^p ~dx=1.$ A similar formula was derived by Roppongi in \cite{roppongi1994hadamard}. As has been noted by Garabedian and Schiffer in \cite{garabedian1953convexity},  this formula for the case $p=2$ was first obtained by Hadamard in  \cite{hadamard1908memoire}.

The formula given in \eqref{GarciaMelianFormula} has been used to study various questions regarding the behavior of eigenvalues and the structure of eigenfunctions. In particular, in \cite{ii2001placement}, \cite{anoop2018strict}, and \cite{kesavan2003two}, the authors have used domain derivatives to study obstacle problems. In \cite{anoop2016structure}, Anoop et al. used them to prove the nonradiality of second eigenfunctions of Dirichlet $p$-Laplacian.

A similar shape derivative formula was obtained for the Robin Laplace operator by Bandle and Wagner in \cite{bandle2015second}. In this paper, the authors use Hadamard's perturbation techniques to get the following domain derivative formula for $\lambda_1 (t):=\lambda_1(\Omega_t);$
\begin{equation}
\label{Robin Laplace lambda dot 0}
    \dot{\lambda}_1(0)=\int_{\partial \Omega}  \{ \left[|\nabla u|^2 - \lambda_1 (0) u^2 + \beta (n-1)Hu^2 - 2 \beta^2 u^2  \right] (v \cdot \eta) \}  ~ds,
\end{equation}
where $\Omega \subseteq \mathbb{R}^n$ is a bounded domain in the H$\ddot{\text{o}}$lder class $C^{2,\alpha}$, $u$ is the corresponding normalized eigenfunction such that $\int_{\Omega}  |{u}(t)|^2 J(t) ~dx=1,$ with $J(t)$ being the determinant of the Jacobian matrix corresponding to the transformation $T_t,$  $H$ denotes the mean curvature of the boundary, $v$ is a $C^{2,\alpha}$ vector field. We would like to  note that a Hadmard variational formula for the Robin Laplace semi-linear problem has been studied by Osawa in \cite{osawa1992hadamard}.

 In this paper, we obtain shape derivative formulae for the first eigenvalue of the Robin $p$-Laplace operator. The boundary integral in the characterization of the first eigenvalue makes the computation significantly harder than the Dirichlet case.  We follow the approach in \cite{bandle2015second}, however, the nonlinearity of the operator makes the problem highly nontrivial. For instance, the lack of $C^2$ regularity for the first eigenfunction of the $p$-Laplace operator introduces an additional layer of difficulty. Now we state our main result.

\begin{theorem}
	\label{Final formulae for lambda dot 0}
Let $\Omega \subseteq \mathbb{R}^n$ be a bounded domain in the H$\ddot{\text{o}}$lder class $C^{2,\alpha},$ $\Omega_t$ be a family of perturbations with $ \Omega_t :=\{y=x+tv(x)+o(t):x\in \Omega, |t| \text{ sufficiently small}\}$ where $v=(v_1(x),v_2(x),..., v_n(x)) $ is a $C^{2,\alpha}$ vector field and $H$ denotes the mean curvature of $\partial \Omega .$ Then for the eigenvalue $\lambda_1(t):=\lambda_1(\Omega_t)$ of \eqref{Robin p-Laplacian},with $\Omega=\Omega_t,$ we have the following equivalent formulae for $\dot{\lambda}_1(0):$ 
\begin{enumerate}
	
	\item 
	$
	\!
	\begin{aligned} [t]
	\dot{\lambda}_1(0)&=\int_{\partial \Omega} \{\left[|\nabla u|^p - \lambda_1 (0) |u|^p + \beta |u|^p (n-1)H + p \beta u \left( |u|^{p-2} \nabla u \cdot \eta \right) \right] (v \cdot \eta)  \} ~ds. 
	\end{aligned}
	$
	\item 
	$
	\!
	\begin{aligned} [t]
	\dot{\lambda}_1(0)&=\int_{\partial \Omega}  \{ \left[|\nabla u|^p - \lambda_1 (0) |u|^p + \beta |u|^p (n-1)H - p \beta^2 \frac{|u|^{2p-2}}{|\nabla u|^{p-2}}  \right] (v \cdot \eta) \}  ~ds. 
	\end{aligned}
	$ 
\end{enumerate}	
\end{theorem}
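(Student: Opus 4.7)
My plan is to follow the Hadamard perturbation strategy of Bandle--Wagner \cite{bandle2015second}, adapted to the nonlinear and degenerate setting of the $p$-Laplacian. The first step is to pull the eigenvalue problem back to the fixed domain via the diffeomorphism $T_t$ by setting $\tilde u(t,x) := u(t,T_t(x))$, so that $\tilde u(t,\cdot)\in W^{1,p}(\Omega)$ satisfies the normalization $\int_\Omega |\tilde u|^p J(t)\,dx = 1$, with $J(t)=\det DT_t$. Using the simplicity and isolation of $\lambda_1$ from \cite{le2006eigenvalue}, together with an implicit function argument applied to the pulled-back weak formulation, I would establish that $t\mapsto(\lambda_1(t),\tilde u(t,\cdot))$ is differentiable at $t=0$ in $\mathbb{R}\times W^{1,p}(\Omega)$; denote the derivatives by $\dot\lambda_1(0)$ and $\dot u := \partial_t\tilde u|_{t=0}$.

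Next, I would differentiate the pulled-back energy identity
\[
\lambda_1(t)=\int_\Omega \bigl|(DT_t)^{-T}\nabla\tilde u\bigr|^p\,J(t)\,dx+\beta\int_{\partial\Omega}|\tilde u|^p\,J_\tau(t)\,ds
\]
at $t=0$, where $J_\tau(t)=1+t(n-1)H\,(v\cdot\eta)+o(t)$ is the surface Jacobian. This yields three kinds of terms: those linear in $\dot u$, interior terms in $v$ and $\nabla u$, and boundary terms involving $(v\cdot\eta)$ and $H$. The crux is to eliminate $\dot u$: differentiating the normalization identity and testing the weak equation for $u$ against $\dot u$ allows all $\dot u$-terms to cancel against the differentiated energy via the Lagrange-multiplier structure. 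The remaining interior integrals are then converted to boundary integrals using the divergence identities
\[
\mathrm{div}\bigl(|\nabla u|^{p-2}u\,\nabla u\bigr)=|\nabla u|^p-\lambda_1(0)|u|^p,\qquad \nabla\bigl(|\nabla u|^p\bigr)\cdot v = \mathrm{div}\bigl(|\nabla u|^p v\bigr)-|\nabla u|^p\,\mathrm{div}\,v,
\]
combined with the Robin boundary condition $|\nabla u|^{p-2}\partial_\eta u=-\beta|u|^{p-2}u$, producing formula~(1). The boundary term $p\beta u(|u|^{p-2}\nabla u\cdot\eta)$ arises specifically from the shape derivative of the surface integrand $|\tilde u|^p$ under transport, while the $(n-1)H$-term arises from $J_\tau'(0)$.

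To obtain formula~(2), I would substitute the Robin boundary condition into the last boundary term of~(1): on $\partial\Omega$, $|u|^{p-2}u(\nabla u\cdot\eta)=-\beta|u|^{2p-2}/|\nabla u|^{p-2}$, so $p\beta u\bigl(|u|^{p-2}\nabla u\cdot\eta\bigr)$ becomes $-p\beta^2|u|^{2p-2}/|\nabla u|^{p-2}$, yielding~(2). The main obstacle will be the lack of $C^2$ regularity of $u$ (only $C^{1,\alpha}$ is available) together with the degeneracy of the weight $|\nabla u|^{p-2}$ on the critical set $\{\nabla u=0\}$. Both prevent naive pointwise integration by parts, so the divergence identities above must be interpreted in the distributional sense or justified by an approximation argument using smoother test fields, with careful treatment of the nodal and critical sets. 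A secondary but nontrivial difficulty is establishing the $W^{1,p}$-differentiability of $\tilde u(t,\cdot)$ at $t=0$, for which the simplicity and isolation of $\lambda_1$ proved in \cite{le2006eigenvalue} are essential.
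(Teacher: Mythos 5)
Your outline follows essentially the same route as the paper (pull back to the fixed domain, normalize $\int_\Omega|\tilde u(t)|^pJ(t)\,dx=1$, differentiate the Rayleigh quotient, eliminate $\dot u$ by testing the weak equation against $\dot u$ together with the differentiated normalization, then convert the remaining volume integrals to boundary integrals), but two steps are genuine gaps as written. First, the surface Jacobian expansion $J_\tau(t)=1+t(n-1)H(v\cdot\eta)+o(t)$ is wrong for a general vector field: the correct first-order coefficient is $\operatorname{div}_{\partial\Omega}v^\tau+(n-1)H(v\cdot\eta)$, as in \eqref{m dot 0}. Correspondingly, the transported boundary term is $p\beta u|u|^{p-2}(\nabla u\cdot v)$ with the \emph{full} vector $v$, and its tangential part $p\beta u\,(|u|^{p-2}\nabla u)^\tau\cdot v^\tau$ does not vanish, since a Robin eigenfunction is not constant on $\partial\Omega$. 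These two tangential contributions cancel each other, but only via the Gauss theorem on surfaces, $\int_{\partial\Omega}\beta|u|^p\operatorname{div}_{\partial\Omega}v^\tau\,ds=-\int_{\partial\Omega}p\beta u\,v^\tau\cdot(|u|^{p-2}\nabla u)^\tau\,ds$; your sketch silently discards both terms, which happens to give the right answer but is not a proof unless you either perform this cancellation or restrict to normal fields and invoke the structure theorem for shape derivatives.

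Second, your remedy for the lack of $C^2$ regularity is too vague to close the argument. The obstruction is the volume integral $\int_\Omega|\nabla u|^{p-2}\partial_jv_i\,u_{x_i}u_{x_j}\,dx$, whose integration by parts requires second derivatives of $u$; neither a distributional reading of your divergence identities nor ``smoother test fields'' helps, because $v$ is already smooth and the problem sits in $u$ itself. The paper's device is to regularize the \emph{equation}, replacing $|\nabla u|^{p-2}$ by $(|\nabla u_\varepsilon|^2+\varepsilon^2)^{(p-2)/2}$ as in \eqref{Epsilon Robin p-Laplacian}, so that $u_\varepsilon\in C^{2,\theta}(\overline\Omega)$, the integration by parts is valid for each $\varepsilon$, and one passes to the limit using $u_\varepsilon\to u$ in $C^1(\overline\Omega)$; some such explicit approximation of $u$ is indispensable. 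Your passage from formula (1) to formula (2) via the Robin condition is correct, and the denominator $|\nabla u|^{p-2}$ in (2) is harmless because the boundary condition together with $u>0$ forces $|\nabla u|>0$ on $\partial\Omega$.
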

\noindent In the case $p=2,$ these expressions for $\dot{\lambda}_1(0)$ reduce to \eqref{Robin Laplace lambda dot 0}. 

As mentioned at the beginning, for the first Robin eigenvalue, there is no general domain monotonicity. A counterexample for the case $p=2$ can be found in \cite{giorgi2005monotonicity}, where the authors have constructed a sequence $\Omega_k$ for which  $B(0,1)\subset \Omega_k \subset B(0,2)$ such that $\lambda_1(\Omega_k) > \lambda_1(B(0,1))>\lambda_1(B(0,2))>0,$ for large values of $k.$ Even in the case of the Robin Laplacian, the literature on domain monotonicity is rather limited. For a sufficiently smooth and bounded domain $\Omega$, Payne and Weinberger proved in \cite{payne1957lower} that $\lambda_1 (\Omega) \geq \lambda_1 (B)$, for a ball $B$ circumscribing $\Omega$ in two and three dimensions. Giorgi and Smits have shown partial monotonicity for the principal eigenvalue when the second domain is a ball (see Theorem 1 in \cite{giorgi2005monotonicity}). They have also proved monotonicity results for a convex domain containing a ball (see Theorem 2 and Corollary 3 of \cite{giorgi2005monotonicity}) and for two convex domains with a ball contained in between (see Corollary 4 of \cite{giorgi2005monotonicity}). In case of dilations of the domain, $\lambda_1(t \Omega) \leq \lambda_1(\Omega), $ for every $t \geq 1$ (see Chapter 4 in \cite{henrot2017shape}).
	
	 For the Robin $p$-Laplacian, Gavitone and Trani have proved some domain monotonicity results for the principal eigenvalue. They have proved these results for a more general class of anisotropic operators. They have shown that $\lambda_1(W)\leq\lambda_1(G), $ for a bounded open set $G\in \mathbb{R}^n$ with $C^{1,\alpha}$ boundary, $\alpha \in (0,1),$ contained in a Wulff shape $W$ (see Theorem 5.1 in \cite{gavitone2018first}). If, in addition, $G$ is convex such that $W \subset G,$ then $\lambda_1(G)\leq\lambda_1(W)$ (see Theorem 5.2 in \cite{gavitone2018first}). Domain monotonicity has also been proved for two convex sets containing a Wulff shape in between (see Corollary 5.1 in \cite{gavitone2018first}). 

We use the shape derivative formulae given in Theorem \ref{Final formulae for lambda dot 0} to obtain some domain monotonicity results. In the next theorem, we look at volume increasing (decreasing) perturbations  in the case when $\Omega$ is a ball and obtain conditions for domain monotonicity. 

	 \begin{theorem}
         \label{Ball case}
         Let $\Omega  = B(0,R)$ in $\mathbb{R}^n.$ 
        \begin{enumerate}
             \item [(i)] If $\beta \geq \left(\frac{n-1}{R(p-1)}\right)^{p-1},$ then $\dot{\lambda}_1(0)<0 \; (>0)$ for a smooth perturbation $v$ of the domain  with $\int_{\partial\Omega}v \cdot \eta ~ds >0 \; (<0)$ on $\partial \Omega$.

             \item [(ii)] Let $R \geq 1,\beta \geq 1$ and $p \geq n.$ Then $\dot{\lambda}_1(0)\leq 0 \; (\geq 0)$ for a smooth $v$  perturbation of the domain with $\int_{\partial\Omega}v \cdot \eta ~ds >0 \; (<0)$ on $\partial \Omega$.
         \end{enumerate} 
\end{theorem}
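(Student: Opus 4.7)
The plan is to apply formula (1) of Theorem~\ref{Final formulae for lambda dot 0} and exploit the radial symmetry of the first eigenfunction on a ball. Since $\Omega=B(0,R)$ is rotation invariant and $\lambda_1$ is simple by \cite{le2006eigenvalue}, the positive first eigenfunction $u$ is radial. Consequently, on $\partial\Omega$ the quantities $u$ and $|\nabla u|$ are positive constants, the mean curvature satisfies $(n-1)H=(n-1)/R$, and $\partial u/\partial\eta = -|\nabla u|$ since $u$ is radially decreasing. Plugging $\partial u / \partial \eta = -|\nabla u|$ into the Robin boundary condition $|\nabla u|^{p-2}\partial u/\partial\eta+\beta u^{p-1}=0$ yields the explicit identity
\begin{equation*}
|\nabla u|=\beta^{1/(p-1)}\,u \quad \text{on } \partial\Omega,
\end{equation*}
where the Hopf-type boundary point lemma for the $p$-Laplacian ensures $|\nabla u|>0$ on $\partial\Omega$.

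Substituting these constants into formula (1), every factor of the integrand except $v\cdot\eta$ is constant on $\partial\Omega$, and a short algebraic simplification (using $|\nabla u|^p=\beta^{p/(p-1)}u_0^p$ and $p\beta u|u|^{p-2}\nabla u\cdot\eta=-p\beta^{p/(p-1)}u_0^p$) gives
\begin{equation*}
\dot{\lambda}_1(0)=C\int_{\partial\Omega} v\cdot\eta\,ds,\qquad C:= u_0^p\Bigl[-(p-1)\beta^{p/(p-1)}+\beta\,\tfrac{n-1}{R}-\lambda_1(0)\Bigr],
\end{equation*}
where $u_0:=u|_{\partial\Omega}>0$. The sign of $\dot{\lambda}_1(0)$ thus reduces to the sign of $C$, which in turn reduces to the comparison of $(p-1)\beta^{1/(p-1)}$ with $(n-1)/R$.

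For part~(i), the hypothesis $\beta\geq\bigl(\tfrac{n-1}{R(p-1)}\bigr)^{p-1}$ is equivalent to $(p-1)\beta^{1/(p-1)}\geq (n-1)/R$, so the first two terms in the bracket defining $C$ sum to a nonpositive number; combined with $-\lambda_1(0)<0$ this yields $C<0$, and the stated conclusion follows from the assumed sign of $\int_{\partial\Omega} v\cdot\eta\,ds$. For part~(ii), $\beta\geq 1$ forces $\beta^{1/(p-1)}\geq 1$, while $R\geq 1$ and $p\geq n$ force $\tfrac{n-1}{R(p-1)}\leq\tfrac{n-1}{p-1}\leq 1$; hence the same inequality holds, giving $C<0$, which is stronger than the stated $\dot{\lambda}_1(0)\leq 0$.

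Once the reduction by radial symmetry is in hand, the remainder is purely algebraic, so the main obstacle is the clean bookkeeping: correctly substituting $|\nabla u|=\beta^{1/(p-1)}u$ into every term of formula (1) and then isolating the expression $(p-1)\beta^{1/(p-1)}-(n-1)/R$ so as to compare it with the precise thresholds hypothesized in (i) and (ii).
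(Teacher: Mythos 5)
Your proposal is correct and follows essentially the same route as the paper: radial symmetry of the first eigenfunction, the boundary identity $|\nabla u|=\beta^{1/(p-1)}u$, and reduction of $\dot{\lambda}_1(0)$ to a constant multiple of $\int_{\partial\Omega}v\cdot\eta\,ds$ with the same bracket $(1-p)\beta^{p/(p-1)}+\beta\tfrac{n-1}{R}-\lambda_1(0)$. The only (harmless) difference is that you deduce part (ii) by verifying that its hypotheses imply those of part (i), whereas the paper re-estimates the bracket directly via $\beta^{p/(p-1)}\geq\beta$ to get $\beta[(n-p)-\lambda_1(0)/\beta]\leq 0$; both yield the (in fact strict) inequality.
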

Note that $\Omega_t$ is a volume increasing (decreasing) perturbation if $\int_{\partial \Omega}  v \cdot \eta \; ~ds >0 (<0).$

We get strict inequality in the second part of the above theorem if at least one of the inequalities for $R,\beta,$ or $p$ is strict. The above results indicate that the convexity constraints in the currently available literature may be too restrictive for at least certain range of values of the Robin boundary parameter. In the next result we prove that when the boundary parameter is large enough,  domain monotonicity property holds for all $C^{2,\alpha}$ class domains.

\begin{theorem}\label{LargeBeta}
    Let $\Omega$ and $\Omega_t$ be as in Theorem \ref{Final formulae for lambda dot 0} and let $v$ be a $C^{2,\alpha}$ vector field such that $v(x)\cdot \eta(x)>0$ for all $x\in \partial\Omega$. Then there exists a constant $\beta^*(\Omega, v)>0$ such that  for $\beta>\beta^*$, $\dot{\lambda}_1(0)<0$.
\end{theorem}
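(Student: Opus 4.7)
The plan is to start from formula (2) of Theorem \ref{Final formulae for lambda dot 0} and exploit the regime $\beta\to\infty$, in which the Robin first eigenfunction degenerates to the Dirichlet one. Denote by $u=u_\beta>0$ the $L^p$-normalised first eigenfunction. The Robin condition gives $\beta u^{p-1}=|\nabla u|^{p-2}|\partial_\eta u|$ on $\partial\Omega$, so
\begin{equation*}
p\beta^2\frac{u^{2p-2}}{|\nabla u|^{p-2}}=p|\nabla u|^{p-2}|\partial_\eta u|^2,
\end{equation*}
and the orthogonal decomposition $|\nabla u|^2=|\partial_\eta u|^2+|\nabla_\tau u|^2$ on $\partial\Omega$ collapses formula (2) to
\begin{equation*}
\dot{\lambda}_1(0) \;=\; -\int_{\partial\Omega} I_\beta(x)\,(v\cdot\eta)\,ds,
\end{equation*}
where
\begin{equation*}
I_\beta \;=\; |\nabla u_\beta|^{p-2}\bigl[(p-1)|\partial_\eta u_\beta|^2-|\nabla_\tau u_\beta|^2\bigr] + \lambda_1(\beta)\,u_\beta^p - \beta u_\beta^p(n-1)H.
\end{equation*}
Since $v\cdot\eta$ is continuous and strictly positive on the compact set $\partial\Omega$, the task reduces to showing that $I_\beta$ is uniformly positive on $\partial\Omega$ for all sufficiently large $\beta$.

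For the asymptotic analysis I would use the classical facts that $\lambda_1(\beta)\nearrow\lambda_1^D(\Omega)$ as $\beta\to\infty$, and that the normalised positive Robin eigenfunctions $u_\beta$ converge to the positive Dirichlet eigenfunction $u_D$ in $C^1(\overline{\Omega})$. Since $u_D\equiv 0$ on $\partial\Omega$ and $|\partial_\eta u_D|\geq c_0>0$ there by V\'azquez's Hopf-type lemma for the $p$-Laplacian, this convergence yields, uniformly on $\partial\Omega$,
\begin{equation*}
u_\beta\to 0,\qquad |\nabla_\tau u_\beta|\to 0,\qquad |\nabla u_\beta|\to|\partial_\eta u_D|\geq c_0.
\end{equation*}
The identity $\beta u_\beta^p=u_\beta\,|\nabla u_\beta|^{p-2}|\partial_\eta u_\beta|$ then forces $\beta u_\beta^p\to 0$ uniformly on $\partial\Omega$, and consequently
\begin{equation*}
I_\beta \;\longrightarrow\; (p-1)|\partial_\eta u_D|^p \;\geq\; (p-1)c_0^p \;>\; 0 \quad \text{uniformly on }\partial\Omega.
\end{equation*}
Picking $\beta^*(\Omega,v)$ so large that $I_\beta\geq \tfrac{1}{2}(p-1)c_0^p$ on $\partial\Omega$ for all $\beta>\beta^*$ yields $\dot{\lambda}_1(0)<0$.

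The step I expect to require the most care is the $C^1(\overline{\Omega})$-convergence $u_\beta\to u_D$. Interior $C^{1,\gamma}$ estimates of Tolksdorf and DiBenedetto are insensitive to $\beta$, so the obstacle is entirely one of uniform boundary regularity. My plan is: first, to obtain a uniform $L^\infty(\Omega)$ bound for $u_\beta$ by Moser iteration, using $\lambda_1(\beta)\leq\lambda_1^D$ and the variational characterisation; second, to observe that the boundary identity $\beta u_\beta^{p-1}=|\nabla u_\beta|^{p-2}|\partial_\eta u_\beta|$ combined with the $L^\infty$ bound keeps the effective Robin data in a $\beta$-independent bounded set, enabling an application of Lieberman's $C^{1,\gamma}$ boundary regularity for quasilinear equations with oblique nonlinear boundary conditions to produce a uniform $C^{1,\gamma}(\overline{\Omega})$ bound on $u_\beta$; and third, to extract a $C^1$-convergent subsequence via Arzel\`a--Ascoli, identify its limit as $u_D$ by passing to the limit in the weak formulation, and upgrade to convergence of the full family by uniqueness of the positive Dirichlet eigenfunction. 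If the Lieberman step proves technically heavy, a fallback is to settle for strong $W^{1,p}$ convergence together with uniform $C^0$ convergence of traces on $\partial\Omega$, which still allows one to pass to the limit termwise in $I_\beta$ at a.e.\ boundary point and conclude.
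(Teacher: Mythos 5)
Your proposal is correct and follows essentially the same route as the paper: the same algebraic reduction of the shape-derivative formula via the Robin boundary condition (so that the $\beta^2$ term becomes $p|\nabla u_\beta|^{p-2}|\partial_\eta u_\beta|^2$), the same key convergence result $\lambda_1^\beta \to \lambda_1^D$ and $u_\beta \to u_D$ in $C^1(\overline{\Omega})$ proved by uniform Lieberman estimates plus Arzel\`a--Ascoli, and the same limiting integrand $(1-p)|\partial_\eta u_D|^p<0$. Your pointwise treatment of the mean-curvature term via $\beta u_\beta^p = u_\beta |\nabla u_\beta|^{p-2}|\partial_\eta u_\beta| \to 0$ and your explicit appeal to V\'azquez's Hopf lemma for the uniform lower bound on $|\partial_\eta u_D|$ are minor (and welcome) sharpenings of steps the paper handles by an integral estimate or leaves implicit.
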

The proof for the above theorem uses the convergence of the first Robin eigenvalue $\lambda_1$ and the corresponding eigenfunction to the first Dirichlet eigenvalue $\lambda_1^D$ and the eigenfunction corresponding
to it. 
There are works describing this asymptotics for the Laplace operator in the literature, see for example \cite{Ramm2009}, \cite{Filinovskiy2014}, and
\cite{Filinovskiy2017}.
Since  we could not find any reference for an appropriate convergence result for the Robin $p$-Laplacian,  we have included a  convergence result in Section 5.

In the next section, we present some results that would be required in the sequel. We prove Theorem \ref{Final formulae for lambda dot 0} in Section 3,  Theorem \ref{Ball case} in Section 4, and Theorem \ref{LargeBeta} in Section 5.

	\section{Preliminaries}
	\label{Preliminaries section}
In this section, we mostly follow the presentation in \cite{bandle2015second}. Let $\Omega \subset \mathbb{R}^n$ be a bounded domain in the H$\ddot{\text{o}}$lder class $C^{2,\alpha}.$ Let $P$ be a point on $\partial \Omega$ and $T_P$ denote the tangent space of $P.$ Then there is a neighborhood $\Omega_P$ of $P$ and a Cartesian coordinate system having an orthonormal basis $B_P:=\{e_i\}, i=1,\ldots,n, $ centered at $P,$ with $e_i \in T_P, i=1,\ldots,n-1, $ and $e_n$ in the direction of unit outer normal $\eta$. Let $(\xi_1, \xi_2,\ldots,\xi_n)$ denote the coordinates with respect to $B_P.$ Also, we make the assumption $ \Omega \cap \Omega_P =\{\xi \in \Omega_P : \xi_n <F(\xi_1, \xi_2,\ldots,\xi_{n-1}) \}, F \in C^{2,\alpha}. $ From this, it is clear that $F_{\xi_i}(0)=0$ for $i=1,2,\ldots,n-1$. Let $\xi ' := (\xi_1, \xi_2,\ldots,\xi_{n-1}).$ In the $\xi ' -$ coordinate system,  $x(\xi ') = (\xi_1, \xi_2,\ldots,\xi_{n-1}, F(\xi '))$ represents points on $\Omega_P \cap  \partial \Omega $ and $\tilde \eta  (\xi ')= (\tilde \eta_1,\tilde \eta_2,\ldots,\tilde \eta_n) $ the unit outer normal. We will follow the Einstein convention, where repeated indices indicate summation from $1$ to $n-1$ or from $1$ to $n$. For $i=1,2,\ldots,n-1,$ the vectors $x_{\xi_i}:=\frac{\partial x}{\partial \xi_i}$ span $T_P.$ Let $g_{ij}:=x_{\xi_i} \cdot x_{\xi_j}$ be the metric tensor of $ \partial \Omega,$ and its inverse be denoted by $g^{ij}.$

Note that any vector $v$ can be written as $v=v^\tau +(v \cdot \eta)\eta,$ where $v^\tau := g^{ij} (v \cdot x_{\xi_i}) x_{\xi_j} $ is the projection to the tangent space. Let $v:\partial \Omega \to \mathbb{R}^n$ be a smooth vector field. The tangential divergence of $v$ is defined as 
\begin{equation} 
\label{Equation to be used in sigma A}
\text{div}_{\partial \Omega} v=g^{ij} \tilde{v}_{\xi_i} \cdot x_{\xi_j}, 
\end{equation} where $ \tilde{v}=v(\xi',F(\xi')).$ For any $f \in C^1(\Omega_P),$ the tangential gradient of $f|_{\partial\Omega}$ is given by
\begin{equation}
    \label{grad tau f on boundary}
    \nabla ^\tau f =g^{ij} (\nabla f \cdot x_{\xi_i}) x_{\xi_j}.
\end{equation}

For $f\in C^1(\partial \Omega),$ $v\in C^{0,1}(\partial \Omega , \mathbb{R}^n ),$ and mean curvature $H$ of the boundary $\partial \Omega,$ the Gauss theorem on surfaces can be stated as 
\begin{equation}
\label{Gauss theorem on surfaces}
    \int_{\partial \Omega}f \text{ div}_{\;\partial \Omega} \;v~ds = - \int_{\partial \Omega}v \cdot \nabla ^\tau f ~ds + (n-1) \int_{\partial \Omega}f (v\cdot \eta) H ~ds.
\end{equation}

\subsection{Perturbation of domain}
\label{Perturbation of domain section}
Let $(\Omega_t)_t $ be a family of perturbations with $ \Omega_t :=\{y=x+tv(x)+o(t):x\in \Omega, |t| \text{ small}\}$ where $v=(v_1(x),v_2(x),..., v_n(x)) $ is a $C^{2,\alpha}$ vector field. The volume element of $\Omega_t$ and surface element of $\partial \Omega_t$ have been computed in \cite{bandle2015second}. We include the same here for the sake of completeness. 

\noindent Let $y(t,\Omega):=y.$ The Jacobian matrix (up to first order terms) of $y$ is given by $I +t D_v,$ where $(D_v)_{ij}=\frac{\partial v_i}{\partial x_j}.$ When $|t|$ is small, using Jacobi's formula, we get 
\begin{equation}
\label{J(t)}
J(t):=\text{det} (I+tD_v) =1+t \text{ div } v+ o(t).
\end{equation}
Now $|\Omega_t|=\int_\Omega J(t) ~dx = |\Omega|+ t \int_\Omega \text{ div } v ~dx +  o(t).$

 In order to compute the surface element of the boundary, $\partial \Omega_t $ can be represented locally as $ \{ y(\xi ') := x(\xi ')+t \tilde v(\xi ') : \xi ' \in \Omega_P \cap \{\xi_n=0\} \}.$ 
Let $a_{ij}:=x_{\xi_i} \cdot \tilde v(\xi_j) + x_{\xi_j} \cdot \tilde v(\xi_i), b_{ij}:=2\tilde v(\xi_i) \cdot \tilde v(\xi_j)$ and $g_{ij}^{t}:=g_{ij}+ta_{ij}+\frac{t^2}{2}b_{ij}.$
Then $|dy|^2=\left( g_{ij}+ta_{ij}+\frac{t^2}{2}b_{ij} \right) d\xi_i d\xi_j=g_{ij}^{t}d\xi_i d\xi_j.$
Now let $G:=(g_{ij}), G^{-1}:=(g^{ij}), A:=(a_{ij}), B:=(b_{ij}) $ and $ G^t:=(g_{ij}^{t}).$ The surface element on $\partial \Omega_t$ is given by $\sqrt{\text{det } G^t}d\xi '.$
Then $\sqrt{\text{det } G^t}=\left(\sqrt{\text{det } G}\right) k(x,t)^{1/2}, $ where $k(x,t):= \text{det }\left( I+tG^{-1}A+\frac{t^2}{2}G^{-1}B \right) .$ Let $\sigma_A:= \text{trace }G^{-1}A,  \sigma_B:= \text{trace }G^{-1}B$ and $ \sigma_{A^2}:= \text{trace }(G^{-1}A)^2.$
Using the Taylor expansion, for small $|t|,$ we have
$k(x,t)=1+t\sigma_A + o(t). $

Let $m(t):=\sqrt{k(x,t)}=1+\frac{t}{2}\sigma_A +o(t).$ Then the surface element $ds_t$ of $\partial \Omega_t$ can be written as $ds_t=m(t) ds,$ where $ds$ is the surface element of $\partial \Omega.$

 Using \eqref{Equation to be used in sigma A}, we have $\sigma_A=2 g^{ij} \tilde v_{\xi_j} \cdot x_{\xi_i} = 2 \text{div}_{\partial\Omega} v.$ It can be shown that $\frac{1}{2}\sigma_A= \text{div}_{\partial\Omega} v^\tau +(n-1)H(v \cdot \eta),$ where $H$ denotes the mean curvature of the boundary. So we have \begin{equation}
\label{m dot 0}
\dot m (0)=\frac{1}{2}\sigma_A= \text{div}_{\partial\Omega} v^\tau +(n-1)H(v \cdot \eta).
\end{equation}

\section{Shape derivative formula}
Consider the following energy functional 
\begin{equation}
\label{First energy functional}
E_{\Omega_t}(u):=\int_{\Omega_t} |\nabla_y u |^p dy -\int_{\Omega_t}\lambda(\Omega_t) |u|^p dy + \beta \int_{\partial \Omega_t}  |u|^p ~ds_t.
\end{equation}

\noindent Let $\tilde{u} \in W^{1,p}(\Omega_t)$ be a critical point of (\ref{First energy functional}), that is, $\tilde{u}$ satisfies the Euler-Lagrange equation 
	\begin{equation}
\label{Euler Lagrange equation in Omega_t}
\left\{
\begin{split}
\Delta_p \tilde{u}+\lambda(\Omega_t)  |\tilde{u}|^{p-2}\tilde{u}&=0\quad \rm{in} ~\Omega_t ,\\
|\nabla \tilde{u}|^{p-2}\frac{\partial \tilde{u}}{\partial\eta_t}+\beta|\tilde{u}|^{p-2}\tilde{u}&
=0\quad\rm{on}~\partial\Omega_t,
\end{split}
\right.
\end{equation} where $\eta_t$ denotes the outer unit normal to $\partial \Omega_t.$

\noindent Now set $E(t):=E_{\Omega_t}(\tilde{u})$ and $\lambda(t):=\lambda(\Omega_t).$ Consider $y=x+tv(x)+o(t) $ with $x(y)$ being its inverse. Using the change of variables, we get 
\begin{equation*}
\label{E in Omega}
E(t)=\int_{\Omega} \left[(\nabla \tilde{u}(t))^T A(t) \nabla \tilde{u}(t) \right]^\frac{p}{2} ~dx -\int_{\Omega}\lambda(t) |\tilde{u}(t)|^pJ(t)~dx + \beta \int_{\partial \Omega}  |\tilde{u}(t)|^p m(t)~ds.
\end{equation*}
where $A_{ij}(t):=\frac{\partial x_i}{\partial y_k}\frac{\partial x_j}{\partial y_k} \left(J(t)\right)^\frac{2}{p},$ where the Jacobian $J(t)$ is defined in \eqref{J(t)}, and $\tilde{u}(t):=\tilde{u}(x+tv(x),t), $ for $t \in (-\epsilon, \epsilon)$ with $\epsilon > 0$ small enough.
	
\noindent In $\Omega,$ $\tilde{u}(t)$ solves the corresponding Euler-Lagrange equation  
 \begin{equation}
 \label{Euler Lagrange equation in Omega}
 \left\{
 \begin{split}
 L_A\tilde{u}(t)+\lambda(t)  |\tilde{u}(t)|^{p-2}\tilde{u}(t)J(t)&=0\quad \rm{in} ~\Omega ,\\
 \partial_{\eta_A}\tilde{u}(t)+\beta|\tilde{u}(t)|^{p-2}\tilde{u}(t)m(t)&=0\quad\rm{on}~\partial\Omega,
 \end{split}
 \right.
 \end{equation}
 where $L_A(\tilde{u}(t)):=\frac{\partial}{\partial x_j}\{\left[(\nabla \tilde{u}(t))^T A \nabla \tilde{u}(t) \right]^\frac{p-2}{2}A_{ij} \tilde{u}_{x_i}(t) \}$ and \\ $\partial_ {\eta_A}(\tilde{u}(t)) :=\left[(\nabla \tilde{u}(t))^T A \nabla \tilde{u}(t) \right]^\frac{p-2}{2}A_{ij} \tilde{u}_{x_i}(t) \eta^j. $
 
 \noindent Writing (\ref{Euler Lagrange equation in Omega}) in the weak form, we get 
 \begin{equation}
 \label{Weak form of Euler Lagrange equation in Omega}
 \int_{\Omega} \left[(\nabla \tilde{u})^T A \nabla \tilde{u} \right]^\frac{p-2}{2} \left[(\nabla \tilde{u})^T A \nabla \phi \right]~dx + \beta \int_{\partial \Omega}  |\tilde{u}|^{p-2} \tilde{u} m \phi ~ds=\int_{\Omega}\lambda(t) |\tilde{u}|^{p-2}\tilde{u}J\phi ~dx, 
 \end{equation} 
for all $\phi \in W^{1,p}(\Omega).$

 Note that $\tilde{u}(t)$ can be expressed as $\tilde{u}(t)=\tilde{u}(0)+t\dot{\tilde{u}}(0)+o(t).$ Also $\tilde{u}(0):=u(x).$ $A_{ij}(t) $ can be expanded as $A_{ij}(t)=A_{ij}(0)+t\dot{A_{ij}}(0)+o(t).$

\begin{lemma}
	\label{Lemma for Aij}
	We have \begin{enumerate}
	    \item[(i)] $J(0)=1$ and $\dot{J}(0)=div\; v,$
	    \item[(ii)] $A_{ij}(0)= \delta_{ij}$ and $\dot{A_{ij}}(0)= \frac{2}{p}\delta_{ij}\;div\; v-\partial_jv_i-\partial_iv_j,$ where $\partial_jv_i:=\frac{\partial v_i}{\partial x_j}.$
	\end{enumerate}
\end{lemma}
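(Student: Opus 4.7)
The plan is to treat this as a direct first-order Taylor expansion around $t=0$ for each ingredient in the definitions of $J(t)$ and $A_{ij}(t)$, relying only on the expansion of the Jacobian of the map $y=x+tv(x)+o(t)$ and elementary matrix algebra.

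For part (i), I would simply invoke equation \eqref{J(t)}, which already states $J(t)=\det(I+tD_v)=1+t\,\mathrm{div}\,v+o(t)$; reading off the constant and linear terms gives $J(0)=1$ and $\dot J(0)=\mathrm{div}\,v$ immediately. No independent computation is needed beyond what is set up in Section 2.1.

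For part (ii), the key step is expanding the inverse Jacobian. Since $\frac{\partial y}{\partial x}=I+tD_v+o(t)$, I would use the Neumann-series (or geometric series) expansion
\[
\frac{\partial x}{\partial y}=(I+tD_v)^{-1}+o(t)=I-tD_v+o(t),
\]
so that $\frac{\partial x_i}{\partial y_k}=\delta_{ik}-t\,\partial_k v_i+o(t)$. Multiplying and summing over $k$ yields
\[
\frac{\partial x_i}{\partial y_k}\frac{\partial x_j}{\partial y_k}
=\delta_{ij}-t(\partial_i v_j+\partial_j v_i)+o(t).
\]
Next, by part (i) together with the chain rule $\left.\frac{d}{dt}\right|_{t=0}J(t)^{2/p}=\frac{2}{p}\dot J(0)$, I obtain
\[
J(t)^{2/p}=1+\tfrac{2}{p}\,t\,\mathrm{div}\,v+o(t).
\]
Multiplying the two expansions and discarding $o(t)$ terms gives
\[
A_{ij}(t)=\delta_{ij}+t\Bigl(\tfrac{2}{p}\delta_{ij}\,\mathrm{div}\,v-\partial_i v_j-\partial_j v_i\Bigr)+o(t),
\]
from which both $A_{ij}(0)=\delta_{ij}$ and the stated formula for $\dot A_{ij}(0)$ can be read off.

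The computation is essentially mechanical; the only mild subtlety I would be careful about is the matrix-inversion step, making sure that the higher-order corrections from $(I+tD_v)^{-1}$ are genuinely $O(t^2)$ so that they fall into $o(t)$, and that the cross-term $(tD_v)\cdot\tfrac{2}{p}t\,\mathrm{div}\,v$ in the product expansion is second order and may be absorbed. Beyond that, the lemma is purely a bookkeeping exercise on the first-order Taylor expansions of $J$ and of $\partial x/\partial y$.
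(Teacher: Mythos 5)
Your proposal is correct and follows essentially the same route as the paper: part (i) is read off from the expansion $J(t)=1+t\,\mathrm{div}\,v+o(t)$, and part (ii) uses the Neumann expansion $(I+tD_v)^{-1}=I-tD_v+o(t)$ combined with $J(t)^{2/p}=1+\frac{2}{p}t\,\mathrm{div}\,v+o(t)$; whether one multiplies the first-order expansions and reads off the coefficient of $t$ (as you do) or applies the product rule at $t=0$ (as the paper does) is immaterial, and the symmetrization over the repeated index $k$ yields $-\partial_i v_j-\partial_j v_i$ either way.
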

\begin{proof}
	From (\ref{J(t)}), it follows that $J(0)=1$ and $\dot{J}(0)=div\; v.$
	
	\noindent For small positive $t,$ 
	$$\frac{\partial x_i}{\partial y_k}=(I+tD_v)^{-1}_{ik} =(I-tD_v+o(t))_{ik}=\delta_{ik}-t\frac{\partial v_k}{\partial x_i}+o(t).$$
	So we have 
	$$A_{ij}(0)= \delta_{ik}\delta_{jk}=\delta_{ij}.$$
	$$\begin{aligned}
	\dot{A_{ij}}(0)&=\left[ \frac{d}{dt}\left(\frac{\partial x_i}{\partial y_k} \right)\frac{\partial x_j}{\partial y_k} J(t)^\frac{2}{p} + \frac{\partial x_i}{\partial y_k}\frac{d}{dt}\left(\frac{\partial x_j}{\partial y_k}  \right) J(t)^\frac{2}{p} + \frac{2}{p} \frac{\partial x_i}{\partial y_k}\frac{\partial x_j}{\partial y_k} J(t)^{\frac{2}{p}-1} \dot{J}(t) \right]_{t=0} \\
	&=-\partial_jv_i-\partial_iv_j+\frac{2}{p}\delta_{ij}\;div\; v.
	\end{aligned} $$
\end{proof} 
Recall that the first eigenvalue $\lambda_1(t):=\lambda_1(\Omega_t)$ of (\ref{Euler Lagrange equation in Omega_t}) is given by the Rayleigh quotient $$\lambda_1(t)=\frac{\int_{\Omega_t} |\nabla_y \tilde{u} |^p dy + \beta \int_{\partial \Omega_t}  |\tilde{u}|^p ~ds_t}  {\int_{\Omega_t}  |\tilde{u}|^p dy}. $$
After a change of variables, we have 
\begin{equation}
\label{Rayleigh quotient after change of variables}
\lambda_1(t)=\frac{\int_{\Omega}  \left[(\nabla \tilde{u}(t))^T A(t) \nabla \tilde{u}(t) \right]^\frac{p}{2} ~dx + \beta \int_{\partial \Omega}  |\tilde{u}(t)|^p m(t) ~ds}  {\int_{\Omega}  |\tilde{u}(t)|^p J(t) ~dx}. 
\end{equation} 

\noindent As \eqref{Euler Lagrange equation in Omega_t} is $(p-1)$ homogeneous, we impose the normalization $\int_{\Omega}  |\tilde{u}(t)|^p J(t) ~dx=1.$ Then  
\begin{equation}
\label{Derivative of normalization}
\frac{d}{dt}\left( \int_{\Omega}  |\tilde{u}(t)|^p J(t) ~dx \right) = p\int_{\Omega}  |\tilde{u}(t)|^{p-2} \tilde{u}(t) \dot{\tilde{u}}(t)J(t) ~dx +\int_{\Omega}  |\tilde{u}(t)|^p \dot{J}(t) ~dx=0.
\end{equation}

\noindent Choosing $\dot{\tilde{u}} $ as a test function in \eqref{Weak form of Euler Lagrange equation in Omega}, we get 
\begin{equation}
\label{After putting test function in weak form}
\int_{\Omega} \left[(\nabla \tilde{u})^T A \nabla \tilde{u} \right]^\frac{p-2}{2} \left[(\nabla \tilde{u})^T A \nabla \dot{\tilde{u}} \right]~dx + \beta \int_{\partial \Omega}  |\tilde{u}|^{p-2} \tilde{u} \dot{\tilde{u}} m  ~ds=\int_{\Omega}\lambda_1(t) |\tilde{u}|^{p-2}\tilde{u} \dot{\tilde{u}} J ~dx.
\end{equation}

\noindent Multiplying both sides of (\ref{After putting test function in weak form}) by $p$ and substituting from (\ref{Derivative of normalization}), we get
\begin{equation}
\label{For substituting in lambda dot}
p\int_{\Omega} \left[(\nabla \tilde{u})^T A \nabla \tilde{u} \right]^\frac{p-2}{2} \left[(\nabla \tilde{u})^T A \nabla \dot{\tilde{u}} \right]~dx + p\beta \int_{\partial \Omega}  |\tilde{u}|^{p-2} \tilde{u} \dot{\tilde{u}} m  ~ds=-\lambda_1(t)\int_{\Omega}  |\tilde{u}|^p \dot{J}~dx.
\end{equation}
Differentiating (\ref{Rayleigh quotient after change of variables}) with respect to $t$ on both sides and using (\ref{For substituting in lambda dot}), we have 
\begin{equation*}
\label{lambda dot t}
\dot{\lambda}_1(t)=\frac{p}{2}\int_{\Omega} \left[(\nabla \tilde{u})^T A \nabla \tilde{u} \right]^\frac{p-2}{2} \left[(\nabla \tilde{u})^T \dot{A} \nabla {\tilde{u}} \right]~dx -\lambda_1(t)\int_{\Omega}  |\tilde{u}|^p \dot{J}~dx+ \beta \int_{\partial \Omega}  |\tilde{u}|^p \dot{m} ~ds.
\end{equation*} 

\noindent So 
\begin{equation*}
\label{lambda dot 0 starting equation}
\dot{\lambda}_1(0)=\frac{p}{2}\int_{\Omega} \left[(\nabla {u})^T A(0) \nabla {u} \right]^\frac{p-2}{2} \left[(\nabla {u})^T \dot{A}(0) \nabla {u} \right]~dx -\lambda_1(0)\int_{\Omega}  |u|^p \dot{J}(0)~dx+ \beta \int_{\partial \Omega}  |u|^p \dot{m}(0) ~ds.
\end{equation*} 
Let $\dot{\Lambda}_1:=\frac{p}{2}\int_{\Omega} \left[(\nabla {u})^T A(0) \nabla {u} \right]^\frac{p-2}{2} \left[(\nabla {u})^T \dot{A}(0) \nabla {u} \right]~dx, \; \dot{\Lambda}_2:=-\lambda_1(0)\int_{\Omega}  |u|^p \dot{J}(0)~dx$ and $\dot{\Lambda}_3:=\beta \int_{\partial \Omega}  |u|^p \dot{m}(0) ~ds.$

\noindent Now $$ \dot{\Lambda}_1= \frac{p}{2}\int_{\Omega} \left[u_{x_i} A_{ij}(0) u_{x_j} \right]^\frac{p-2}{2} \left[u_{x_i} \dot{A}_{ij}(0) u_{x_j} \right] ~dx. $$

\noindent Using Lemma \ref{Lemma for Aij} and simplifying,
\begin{equation}
\label{la_1 dash}
   \dot{\Lambda}_1=\int_{\Omega} |\nabla {u}|^{p} \text{ div } v ~dx -p\int_{\Omega} |\nabla {u}|^{p-2}\partial_j v_i u_{x_i} u_{x_j} ~dx.
\end{equation}

If the eigenfunction $u$ had $C^2$ regularity, we could have used integration by parts in the above equation to obtain a simplified expression. However, since $u$ has only $C^{1,\alpha}$ regularity, following the approach in \cite{melian2001perturbation} and \cite{del2009some}, for $\varepsilon>0$ small we consider the  perturbed problem:  

\begin{equation}
	\label{Epsilon Robin p-Laplacian}
	\left\{
	\begin{split}
\mbox{div} ((|\nabla u_\varepsilon|^2+\varepsilon ^2)^\frac{p-2}{2} \nabla u_\varepsilon)+\lambda_\varepsilon  |u_\varepsilon|^{p-2}u_\varepsilon&=0\quad \rm{in} ~\Omega,\\
	(|\nabla u_\varepsilon|^2+\varepsilon ^2)^\frac{p-2}{2}\frac{\partial u_\varepsilon}{\partial\eta}+\beta|u_\varepsilon|^{p-2}u_\varepsilon&=0\quad\rm{on}~\partial\Omega.
	\end{split}
	\right.
	\end{equation} 
Then $u_\varepsilon$ is in  $C^{2,\theta}(\overline{\Omega})$ for some $\theta \in (0,1) $  and $u_\varepsilon \to u$ in $C^1(\overline{\Omega})$ as $\varepsilon \to 0$ (see \cite{MR0241822} and \cite{lieberman1988boundary}).

\noindent Consider 
\begin{equation}
\label{Lambda_1 epsilon dot}
   \dot{\Lambda}_1^\varepsilon=\int_{\Omega} (|\nabla u_\varepsilon|^2+\varepsilon ^2)^\frac{p}{2} \text{ div } v ~dx -p\int_{\Omega}(|\nabla u_\varepsilon|^2+\varepsilon ^2)^\frac{p-2}{2}\partial_j v_i (u_\varepsilon)_{x_i} (u_\varepsilon)_{x_j} ~dx.
\end{equation}

\noindent Note that 
\begin{equation}
    \label{p-Laplace expansion for perturbed problem}
    \mbox{div} ((|\nabla u_\varepsilon|^2+\varepsilon ^2)^\frac{p-2}{2} \nabla u_\varepsilon)=(|\nabla u_\varepsilon|^2+\varepsilon ^2)^\frac{p-2}{2} \Delta u_\varepsilon + (p-2) (|\nabla u_\varepsilon|^2+\varepsilon ^2)^\frac{p-4}{2} (u_\varepsilon)_{x_i} (u_\varepsilon)_{x_j x_i} (u_\varepsilon)_{x_j}.
\end{equation}
\noindent After applying the integration by parts in both the integrals of \eqref{Lambda_1 epsilon dot} and using \eqref{p-Laplace expansion for perturbed problem}, we get 
 
\begin{equation}
 \label{Lambda_1 epsilon dot new}
    \begin{aligned}
     \dot{\Lambda}_1^\varepsilon &= p\int_{\Omega} \mbox{div} ((|\nabla u_\varepsilon|^2+\varepsilon ^2)^\frac{p-2}{2} \nabla u_\varepsilon) (\nabla u_\varepsilon \cdot v)~dx \\
     &-p\int_{\partial \Omega}(|\nabla u_\varepsilon|^2+\varepsilon ^2)^\frac{p-2}{2}(\nabla u_\varepsilon \cdot v)(\nabla u_\varepsilon \cdot \eta) ~ds+\int_{\partial \Omega} (|\nabla u_\varepsilon|^2+\varepsilon ^2)^\frac{p}{2}(v \cdot \eta)~ds.
   \end{aligned}
 \end{equation}

\noindent Using the integration by parts in $\dot{\Lambda}_2,$ we have  $$\dot{\Lambda}_2=  \lambda_1 (0) \int_{\Omega} p |u|^{p-2} u (\nabla u \cdot v) ~dx - \lambda_1 (0)  \int_{\partial \Omega} |u|^p (v \cdot \eta) ~ds. $$
\noindent Let $$\dot{\Lambda}_2^\varepsilon=  \lambda_\varepsilon \int_{\Omega} p |u_\varepsilon|^{p-2} u_\varepsilon (\nabla u_\varepsilon \cdot v) ~dx - \lambda_1 (0)  \int_{\partial \Omega} |u|^p (v \cdot \eta) ~ds.$$
Since $\lambda_\varepsilon |u_\varepsilon|^{p-2} u_\varepsilon = - \mbox{div} ((|\nabla u_\varepsilon|^2+\varepsilon ^2)^\frac{p-2}{2} \nabla u_\varepsilon)$ in $\Omega ,$  $\dot{\Lambda}_2^\varepsilon$ becomes 
\begin{equation}
 \label{Lambda_2 epsilon dot}
    \dot{\Lambda}_2^\varepsilon = - p\int_{\Omega} \mbox{div} ((|\nabla u_\varepsilon|^2+\varepsilon ^2)^\frac{p-2}{2} \nabla u_\varepsilon) (\nabla u_\varepsilon \cdot v)~dx-\lambda_1 (0)  \int_{\partial \Omega} |u|^p (v \cdot \eta) ~ds.
\end{equation}

\noindent Now consider $\dot{\Lambda}_3. $ Using (\ref{m dot 0}), we get
\begin{equation}
  \label{Lambda_3 dot}
 \dot{\Lambda}_3= \beta \int_{\partial \Omega} |u|^p \{ \text{div}_{\partial\Omega} v^\tau +(n-1)H(v \cdot \eta) \} ~ds.  
\end{equation}

\noindent Note that $\dot{\Lambda}_1^\varepsilon$ and $\dot{\Lambda}_2^\varepsilon$ converges to $\dot{\Lambda}_1$ and $\dot{\Lambda}_2,$ respectively as $\varepsilon \to 0.$ Hence  
\begin{equation}
\label{First formula for lambda dot 0}
\begin{aligned}
\dot{\lambda}_1(0)&=\displaystyle{\lim_{\varepsilon \to 0}}\;\dot{\Lambda}_1^\varepsilon+\dot{\Lambda}_2^\varepsilon+\dot{\Lambda}_3=\dot{\Lambda}_1+\dot{\Lambda}_2+\dot{\Lambda}_3\\
&=\int_{\partial \Omega} \{\left[|\nabla u|^p - \lambda_1 (0) |u|^p + \beta |u|^p (n-1)H \right] (v \cdot \eta) \\
&\quad \quad -p |\nabla u|^{p-2}(\nabla u \cdot v)(\nabla u \cdot \eta) + \beta |u|^p  \text{div}_{\partial\Omega} v^\tau  \} ~ds. 
\end{aligned} 
\end{equation}
\noindent Using the boundary condition of (\ref{Euler Lagrange equation in Omega_t}) in (\ref{First formula for lambda dot 0}),
\begin{equation}
\label{Third formula for lambda dot 0}
\begin{aligned}
\dot{\lambda}_1(0)&=\int_{\partial \Omega} \{\left[|\nabla u|^p - \lambda_1 (0) |u|^p + \beta |u|^p (n-1)H \right] (v \cdot \eta) \\
&\quad \quad +p \beta |u|^{p-2} u (\nabla u \cdot v) + \beta |u|^p  \text{div}_{\partial\Omega} v^\tau  \} ~ds. 
\end{aligned} 
\end{equation}

\noindent We can write 
$$ \begin{aligned}
|u|^{p-2} (\nabla u \cdot v )&= \left( \left( |u|^{p-2} \nabla u  \right)^\tau +  \left( |u|^{p-2} \nabla u \cdot \eta \right) \eta \right) \cdot \left( v^\tau + (v \cdot \eta) \eta \right) \\
&= \left( |u|^{p-2} \nabla u  \right)^\tau \cdot v^\tau + \left( |u|^{p-2} \nabla u \cdot \eta \right) (v \cdot \eta).
\end{aligned} $$
Substituting this expression in \eqref{Third formula for lambda dot 0}, we arrive at 
\begin{equation}
\label{Fourth formula for lambda dot 0}
\begin{aligned}
\dot{\lambda}_1(0)&=\int_{\partial \Omega} \{\left[|\nabla u|^p - \lambda_1 (0) |u|^p + \beta |u|^p (n-1)H + p \beta u \left( |u|^{p-2} \nabla u \cdot \eta \right) \right] (v \cdot \eta) \\
&\quad \quad +p \beta u \left( |u|^{p-2} \nabla u  \right)^\tau \cdot v^\tau + \beta |u|^p  \text{div}_{\partial\Omega} v^\tau  \} ~ds. 
\end{aligned} 
\end{equation}
Using the Gauss theorem on surfaces (\ref{Gauss theorem on surfaces}), 
	 \begin{equation*}
	 \begin{aligned}
	 \int_{\partial \Omega}  \beta |u|^p  \text{div}_{\partial\Omega} v^\tau ~ds &=- \beta\int_{\partial \Omega}v^\tau \cdot \nabla ^\tau |u|^p ~ds + \beta(n-1) \int_{\partial \Omega}|u|^p (v^\tau\cdot\eta) H ~ds \\
	 &=- \beta\int_{\partial \Omega}v^\tau \cdot \nabla ^\tau |u|^p ~ds.
	 \end{aligned}
	 \end{equation*}
	By (\ref{grad tau f on boundary}), $\nabla ^\tau |u|^p$ can be rewritten as follows,
	\begin{equation*}
	 \nabla ^\tau |u|^p =g^{ij} (\nabla |u|^p \cdot x_{\xi_i}) x_{\xi_j} =g^{ij}(p|u|^{p-2}u\nabla u \cdot x_{\xi_i}) x_{\xi_j} =pu(|u|^{p-2}\nabla u)^\tau.
	 \end{equation*}
	 So \begin{equation}
	 \label{For cancelling the extra terms in lambda dot 0}
	    \int_{\partial \Omega}  \beta |u|^p  \text{div}_{\partial\Omega} v^\tau ~ds = - \int_{\partial \Omega}p \beta u v^\tau \cdot  (|u|^{p-2}\nabla u)^\tau ~ds.
	 \end{equation}
	 Using (\ref{For cancelling the extra terms in lambda dot 0}), the expression in (\ref{Fourth formula for lambda dot 0}) reduces to 
	 \begin{equation*}
	 \label{Final formula for lambda dot 0 -1}
	     \dot{\lambda}_1(0)=\int_{\partial \Omega} \{\left[|\nabla u|^p - \lambda_1 (0) |u|^p + \beta |u|^p (n-1)H + p \beta u \left( |u|^{p-2} \nabla u \cdot \eta \right) \right] (v \cdot \eta)  \} ~ds.
	 \end{equation*}
Using the boundary condition of (\ref{Euler Lagrange equation in Omega_t}), we can rewrite the above equation as 
\begin{equation*}
\label{Final formula for lambda dot 0 -2}
 \dot{\lambda}_1(0)=\int_{\partial \Omega}  \{ \left[|\nabla u|^p - \lambda_1 (0) |u|^p + \beta |u|^p (n-1)H - p \beta^2 \frac{|u|^{2p-2}}{|\nabla u|^{p-2}}  \right] (v \cdot \eta) ~ds. 
\end{equation*}
Thus we have derived the shape derivative results summarized in Theorem \ref{Final formulae for lambda dot 0}.

\begin{corollary}
\label{lambda dot 0=0}
	     For a family $\Omega_t$ of perturbations of $\Omega$ with the property that $\int_{\Omega} \text{\rm{div }}v \;~dx =0,$ $\dot{\lambda}_1(0)=0$ if and only if, on $\partial \Omega,$
	     \begin{equation*}
	         |\nabla u|^p - \lambda_1 (0) |u|^p + \beta |u|^p (n-1)H - p \beta^2 \frac{|u|^{2p-2}}{|\nabla u|^{p-2}}= \rm{constant.}
	     \end{equation*}
	 \end{corollary}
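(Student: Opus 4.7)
The plan is to start from formula (2) in Theorem \ref{Final formulae for lambda dot 0}, writing
\[
\dot{\lambda}_1(0) = \int_{\partial\Omega} f(x)\,(v\cdot\eta)\,ds,
\qquad
f:= |\nabla u|^p - \lambda_1(0)|u|^p + \beta|u|^p(n-1)H - p\beta^2 \frac{|u|^{2p-2}}{|\nabla u|^{p-2}},
\]
where $f\in C(\partial\Omega)$ thanks to the $C^{1,\alpha}$ regularity of $u$ together with the smoothness of $\partial\Omega$. The problem then reduces to an orthogonality statement on $\partial\Omega$ once the hypothesis on $v$ is transferred to the boundary.

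The first step is to invoke the divergence theorem to rewrite
\[
\int_{\Omega}\operatorname{div} v\,dx = \int_{\partial\Omega} v\cdot\eta\,ds = 0,
\]
so the volume-preserving condition is equivalent to the mean of $v\cdot\eta$ on $\partial\Omega$ being zero. The sufficiency (\emph{if}) direction is then immediate: if $f\equiv c$ is constant on $\partial\Omega$, then
\[
\dot{\lambda}_1(0) = c\int_{\partial\Omega} v\cdot\eta\,ds = 0.
\]

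For the necessity (\emph{only if}) direction, I would assume that $\dot{\lambda}_1(0)=0$ for \emph{every} admissible perturbation $v$ and deduce that $f$ is constant. The idea is that the normal traces of such vector fields exhaust the mean-zero continuous functions on $\partial\Omega$. Given any $g\in C(\partial\Omega)$ with $\int_{\partial\Omega} g\,ds=0$, smooth mollification plus a standard tubular-neighborhood extension (multiply the extension of $g$ by a cutoff in the inward normal direction and by $\eta$) produces a $C^{2,\alpha}$ vector field $v$ on $\overline{\Omega}$ with $v\cdot\eta = g$ on $\partial\Omega$. Applying the assumption to this $v$ gives
\[
\int_{\partial\Omega} f\,g\,ds = 0 \quad \text{for every mean-zero } g\in C(\partial\Omega).
\]
Setting $\bar f := |\partial\Omega|^{-1}\int_{\partial\Omega} f\,ds$ and choosing $g := f-\bar f$, which automatically has mean zero, yields
\[
\int_{\partial\Omega} (f-\bar f)^2\,ds = \int_{\partial\Omega} f(f-\bar f)\,ds - \bar f\int_{\partial\Omega}(f-\bar f)\,ds = 0,
\]
so $f\equiv \bar f$ on $\partial\Omega$, as required.

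The only nontrivial point is the extension step: producing a $C^{2,\alpha}$ vector field in $\overline{\Omega}$ whose normal trace on $\partial\Omega$ equals a prescribed continuous (or smooth, by density) function $g$ with zero mean. This is routine, and I would do it locally in tubular coordinates near $\partial\Omega$, patch via a partition of unity, and finally cut off to zero in the interior; the mean-zero condition on $g$ is precisely what is preserved by the divergence theorem, so no compatibility issue arises.
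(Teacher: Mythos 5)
Your proposal is correct and follows the same overall strategy as the paper (same sufficiency argument via the divergence theorem; necessity via testing against perturbations whose normal traces span the mean-zero functions on $\partial\Omega$), but the final step of the necessity direction is executed differently. The paper argues by contradiction: it sets $\zeta^{\pm}=\max\{0,\pm(\zeta-\bar\zeta)\}$ and constructs one volume-preserving perturbation with $v\cdot\eta>0$ on $\mathrm{supp}\,\zeta^{+}$ and $v\cdot\eta<0$ on $\mathrm{supp}\,\zeta^{-}$, which forces $\dot{\lambda}_1(0)>0$. You instead derive the orthogonality relation $\int_{\partial\Omega} f\,g\,ds=0$ for all mean-zero $g$ and test with $g=f-\bar f$ to get $\int_{\partial\Omega}(f-\bar f)^2\,ds=0$. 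Your variant is arguably cleaner (the mean-zero condition is automatic for $f-\bar f$, and no sign-balancing of the test field is needed), but it asks slightly more of the trace-realization step: $f$ is only H\"older continuous (since $u\in C^{1,\alpha}$ and $H\in C^{0,\alpha}$), while the normal trace of a $C^{2,\alpha}$ vector field on a $C^{2,\alpha}$ boundary is more regular than that, so you cannot literally achieve $v\cdot\eta=f-\bar f$; you must run the orthogonality against smooth mean-zero $g$ and pass to $f-\bar f$ by density, exactly as you parenthetically note. With that (standard) approximation made explicit, the argument is complete. Both proofs, yours and the paper's, implicitly read the ``only if'' direction as quantified over \emph{all} volume-preserving perturbations, which is the intended meaning of the statement, and both implicitly use that $|\nabla u|$ does not vanish on $\partial\Omega$ (which follows from the boundary condition and the positivity of the first eigenfunction) so that $f$ is well defined.
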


\begin{proof}
    Let $\zeta (x) := |\nabla u|^p - \lambda_1 (0) |u|^p + \beta |u|^p (n-1)H - p \beta^2 \frac{|u|^{2p-2}}{|\nabla u|^{p-2}}.$ By Green's theorem, $\int_{\partial \Omega}(v\cdot \eta)~ds=\int_{\Omega} \text{div } v ~dx=0 $ as $\Omega_t$ is volume preserving. If $\zeta $ is a constant then $\dot{\lambda}_1(0)= \int_{\partial \Omega}\zeta (v\cdot \eta)~ds= 0. $\\
    Define $\overline{\zeta}(x):=\frac{1}{|\partial \Omega|}\int_{\partial \Omega}\zeta ~ds.$ So $\int_{\partial \Omega}(v\cdot \eta)\overline{\zeta}~ds=\overline{\zeta}\int_{\partial \Omega}(v\cdot \eta)~ds=0.$ \\
    Assume $\dot{\lambda}_1(0)=0.$ This implies $\int_{\partial \Omega}\zeta (v\cdot \eta)~ds=0.$ So 
    \begin{equation}
    \label{In between step}
        \int_{\partial \Omega}\zeta (v\cdot \eta)~ds=\int_{\partial \Omega}(\zeta-\overline{\zeta}) (v\cdot \eta)~ds.
    \end{equation}
    Now define $\zeta^\pm =\text{ max} \{0, \pm (\zeta-\overline{\zeta})\}. $ If $\zeta$ is not a constant, $\zeta^\pm \neq0.$ Therefore, a volume preserving perturbation can be constructed so that $(v\cdot \eta)>0$ in supp $\zeta ^+$ and $(v\cdot \eta)<0$ in supp $\zeta ^-.$ Then, using (\ref{In between step}), $$\dot{\lambda}_1(0)= \int_{\partial \Omega}(\zeta-\overline{\zeta}) (v\cdot \eta)~ds = \int_{\text{supp } \zeta ^+}\zeta^+ (v\cdot \eta)~ds - \int_{\text{supp } \zeta ^-}\zeta^- (v\cdot \eta)~ds > 0,$$ which is a contradiction. 
\end{proof}
Note that such a family of perturbations will be volume preserving of the first order. 
\section{Domain monotonicity results for perturbations of a ball}
Let $\Omega$ be a ball of radius $R$ in $\mathbb{R}^n$ centered at the origin. Then $\eta=\frac{x}{R}$ and $H=\frac{1}{R}.$ Let  $u$ be an eigenfunction corresponding to $\lambda_1(\Omega)$.
Note that the functional $J(z)$ in the characterization \eqref{Rayleigh}  of $\lambda_1(\Omega)$ is rotation invariant. Hence any rotation of $u$ about the origin, say $\hat{u}$, will also be a minimizer of $J(z)$.  As $\lambda_1(\Omega)$ is simple, it follows that the eigenfunction $u$ is radial. Let $u_r=\frac{du}{dr}$ with $r=|x|.$ Now $\nabla u \cdot \eta =u_{x_i}\frac{x_i}{R}=u_r\frac{\partial r}{\partial x_i}\frac{x_i}{R}=u_r\frac{r}{R}.$ On $\partial \Omega,$ $\nabla u \cdot \eta =u_r(R),$ and hence $|\nabla u \cdot \eta |=|u_r(R)|.$ We have $|\nabla u |^2=\displaystyle{\sum_{i}}\left (  u_r\frac{\partial r}{\partial x_i}\right )^2=u_r^2\displaystyle{\sum_{i}}\left (\frac{x_i}{r} \right )^2=u_r^2.$ So on the boundary, we have $|\nabla u \cdot \eta |= |\nabla u |.$ Using the boundary condition, we get $|\nabla u|=\beta^\frac{1}{p-1}|u|.$

Now the above computations can be used to prove Theorem \ref{Ball case}. 
\begin{proof}[Proof of Theorem \ref{Ball case}]
 \begin{enumerate}
 \item [(i)] We have 
 $$\begin{aligned} 
    \dot{\lambda}_1(0)&=\int_{\partial \Omega}  \left \{ \left[|\nabla u|^p - \lambda_1 (0) |u|^p + \beta |u|^p \frac{(n-1)}{R} - p \beta^2 \frac{|u|^{2p-2}}{|\nabla u|^{p-2}}  \right] (v \cdot \frac{x}{R}) \right \}  ~ds \\
    &=\int_{\partial \Omega} \left \{ \left[\beta^\frac{p}{p-1}|u|^{p} - \lambda_1 (0) |u|^p + \beta  \frac{(n-1)}{R}|u|^p - p \beta^2 \frac{|u|^{2p-2}}{\beta^\frac{p-2}{p-1}|u|^{p-2}}  \right] (v \cdot \frac{x}{R}) \right \}  ~ds \\
    &=\int_{\partial \Omega}  \left \{ \left[\beta^\frac{p}{p-1}(1-p) - \lambda_1 (0)  + \beta  \frac{(n-1)}{R}  \right]|u|^{p} (v \cdot \frac{x}{R}) \right \} ~ds\\
    &<0 (>0) \text{ when } \beta > \left(\frac{n-1}{R(p-1)}\right)^{p-1} \text{ since } u \text{ is radial and }\int_{\partial \Omega} v \cdot \eta ~ds>0 \; (<0) \text{ on } \partial \Omega.
    \end{aligned}
    $$
    \item [(ii)] Since $R \geq 1,\beta \geq 1$ and $p \geq n,$
    $$
\begin{aligned} 
	\dot{\lambda}_1(0)&=\int_{\partial \Omega}  \left \{ \left[\beta^\frac{p}{p-1}(1-p) - \lambda_1 (0)  + \beta  \frac{(n-1)}{R}  \right]|u|^{p} (v \cdot \frac{x}{R}) \right \} ~ds \\
&\leq \int_{\partial \Omega}  \left \{ \beta \left[(1-p) - \frac{\lambda_1 (0) }{\beta} +  (n-1)  \right]|u|^{p} (v \cdot \frac{x}{R}) \right \} ~ds \\
&=\int_{\partial \Omega}  \left \{ \beta \left[(n-p) - \frac{\lambda_1 (0) }{\beta}   \right]|u|^{p} (v \cdot \frac{x}{R}) \right \} ~ds \\
&\leq 0 (\geq 0) \text{ since } u \text{ is radial and }\int_{\partial \Omega} v \cdot \eta ~ds >0 \; (<0) \text{ on } \partial \Omega.
\end{aligned}
$$ 
\end{enumerate}
\end{proof}
From (i) it follows that for a fixed $\beta >0,$ if $R>\frac{ n-1}{\beta^\frac{1}{p-1} (p-1)},$ then $\dot{\lambda}_1(0)<0 \; (>0)$ for a smooth perturbation $v$ of the domain with $\int_{\partial\Omega}v \cdot \eta ~ds >0 \; (<0)$ on $\partial \Omega$.


\section{Domain monotonicity for large $\beta$}
Let us denote the first eigenvalue of 
\eqref{Robin p-Laplacian} by $\lambda_1^\beta$ and
$\phi_\beta$ be the corresponding normalized eigenfunction with $\|\phi_\beta\|_{\infty} = 1$. Note that $\phi_\beta \in C^{1,\alpha}(\overline{\Omega})$ by \cite[Theorem 2]{lieberman1988boundary}.
Let us also introduce the first eigenvalue of the  $p$-Laplace
operator with Dirichlet boundary condition
$$
\lambda_1^D
=
\inf\left\{
\frac{\int_{\Omega} |\nabla u|^p \,dx}{\int_{\Omega} |u|^p \,dx}:~
u \in W_0^{1,p}(\Omega) \setminus\{0\}
\right\}
$$
and denote by $\phi_D$ the corresponding normalized ($\|\phi_D\|_{\infty} = 1$) first eigenfunction.

It is known that $\lambda_1^\beta$  and $\phi_\beta$ converge to $\lambda_1^D$  and $\phi_D$  respectively as  the boundary parameter $\beta \to \infty.$ Since we could not find an appropriate reference for this convergence, we prove the following:   

\begin{proposition}
\label{Robin limit goes to dirichlet}
	We have $\lambda_1^\beta \to \lambda_1^D$ and $\phi_\beta \to \phi_D$ in $C^1(\overline{\Omega})$ as $\beta \to \infty$.
\end{proposition}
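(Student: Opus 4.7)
The plan is to prove convergence of the eigenvalues and eigenfunctions by a compactness-plus-minimality argument, followed by a regularity upgrade to reach $C^1(\overline{\Omega})$.

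First, I would establish the eigenvalue convergence by a sandwich. Monotonicity of the Rayleigh quotient $J$ in $\beta$ gives $\lambda_1^{\beta_1}\le \lambda_1^{\beta_2}$ whenever $\beta_1<\beta_2$. For the upper bound, use any $u\in W_0^{1,p}(\Omega)$ as a test function in \eqref{Rayleigh}: the boundary integral vanishes, and plugging $u=\phi_D$ yields $\lambda_1^\beta\le \lambda_1^D$ for every $\beta>0$. Hence $\lambda_1^\beta\nearrow \lambda_\infty$ with $\lambda_\infty\le \lambda_1^D$.

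Next I would extract a limit for the eigenfunctions. From the Rayleigh identity
\begin{equation*}
\int_{\Omega}|\nabla \phi_\beta|^p\,dx + \beta\int_{\partial\Omega}|\phi_\beta|^p\,ds=\lambda_1^\beta \int_{\Omega}|\phi_\beta|^p\,dx\le \lambda_1^D|\Omega|,
\end{equation*}
together with $\|\phi_\beta\|_\infty=1$, I obtain a uniform $W^{1,p}(\Omega)$ bound and, crucially, $\int_{\partial\Omega}|\phi_\beta|^p\,ds=O(1/\beta)\to 0$. Along any sequence $\beta_k\to\infty$, compactness yields (after extraction) $\phi_{\beta_k}\rightharpoonup \phi_\infty$ in $W^{1,p}(\Omega)$ and $\phi_{\beta_k}\to \phi_\infty$ strongly in $L^p(\Omega)$ and $L^p(\partial\Omega)$; the trace bound forces $\phi_\infty\in W_0^{1,p}(\Omega)$. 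To prevent a trivial limit, apply interior $C^{1,\alpha}_{\mathrm{loc}}$ estimates of DiBenedetto/Tolksdorf to $-\Delta_p\phi_\beta=\lambda_1^\beta|\phi_\beta|^{p-2}\phi_\beta$ (the right-hand side is uniformly bounded in $L^\infty$), so $\phi_{\beta_k}\to \phi_\infty$ locally uniformly, and since each $\phi_\beta$ attains sup $1$, a standard argument combined with Harnack inequality for the $p$-Laplace operator keeps the limiting sup bounded away from zero. Passing to the limit in the weak formulation, $\phi_\infty$ is a nontrivial weak solution of the Dirichlet $p$-Laplace eigenvalue problem with eigenvalue $\lambda_\infty$, so by the Rayleigh characterization $\lambda_\infty\ge \lambda_1^D$, forcing $\lambda_\infty=\lambda_1^D$ and, by simplicity together with sign-normalization from $\phi_\beta>0$, $\phi_\infty=\phi_D$. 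Uniqueness of the subsequential limit upgrades the convergence to the full net $\beta\to\infty$.

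The main obstacle is strengthening the convergence from locally uniform/weak-$W^{1,p}$ to $C^1(\overline{\Omega})$. My plan is to invoke the boundary regularity theorem of Lieberman \cite{lieberman1988boundary} for quasilinear oblique-derivative problems applied to \eqref{Robin p-Laplacian}, written in the form
\begin{equation*}
|\nabla \phi_\beta|^{p-2}\frac{\partial \phi_\beta}{\partial \eta}=-\beta|\phi_\beta|^{p-2}\phi_\beta\quad\text{on }\partial\Omega.
\end{equation*}
The trace estimate $\|\phi_\beta\|_{L^p(\partial\Omega)}=O(\beta^{-1/p})$, boot-strapped via Moser-type iteration to $\|\phi_\beta\|_{L^\infty(\partial\Omega)}=o(1)$, shows that the nonlinear boundary datum $-\beta|\phi_\beta|^{p-2}\phi_\beta$ is in fact uniformly bounded in an appropriate Hölder norm independently of $\beta$ for $\beta$ large. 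Feeding this back into Lieberman's estimate gives $\|\phi_\beta\|_{C^{1,\alpha'}(\overline{\Omega})}\le C$ uniformly in $\beta$ for some $\alpha'\in(0,\alpha]$. Arzelà–Ascoli then delivers $C^1(\overline{\Omega})$ convergence along a subsequence, and identification of the limit as $\phi_D$ forces convergence of the full family. The delicate point, and the place requiring the most care, is the uniform-in-$\beta$ control of the boundary term: without the preliminary trace decay $\|\phi_\beta\|_{L^\infty(\partial\Omega)}\to 0$, the Robin coefficient $\beta$ blows up and Lieberman's constant is not directly uniform; showing that the smallness of $\phi_\beta|_{\partial\Omega}$ wins against the growth of $\beta$ is the technical heart of the proof.
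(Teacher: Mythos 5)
Your overall architecture --- the monotone eigenvalue sandwich via the test function $\phi_D$, compactness of the eigenfunctions, vanishing of the trace, identification of the limit with $(\lambda_1^D,\phi_D)$ --- is the same as the paper's, but the paper runs the steps in the opposite order. It first gets a uniform bound on $\|\phi_\beta\|_{C^{1,\alpha}(\overline{\Omega})}$ by applying Lieberman's global regularity theorem, using only that the interior data $\lambda_1^\beta|\phi_\beta|^{p-2}\phi_\beta$ is uniformly bounded in $L^\infty$ (since $\|\phi_\beta\|_\infty=1$ and $\lambda_1^\beta\le\lambda_1^D$). Arzel\`a--Ascoli then gives $C^1(\overline{\Omega})$ convergence of a subsequence at the outset; the trace vanishing follows afterwards by contradiction from the boundedness of $\lambda_1^\beta$, nontriviality of the limit is immediate from $\|\phi_\beta\|_\infty=1$ plus uniform convergence, and the eigenvalue identification is done purely at the level of Rayleigh quotients. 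Your first two stages (eigenvalue sandwich; weak $W^{1,p}$ compactness plus interior estimates plus Harnack to produce a nontrivial $\phi_\infty\in W_0^{1,p}$ with $\lambda_\infty=\lambda_1^D$) are workable, though heavier than necessary: you do not need to pass to the limit in the weak formulation of the $p$-Laplace equation (which is delicate under only weak gradient convergence), since weak lower semicontinuity of $\int|\nabla\cdot|^p$ together with $\phi_\infty\in W_0^{1,p}\setminus\{0\}$ already forces $\lambda_1^D\le\liminf\lambda_1^\beta$.

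The genuine gap is in your final upgrade to $C^1(\overline{\Omega})$. To make Lieberman's constant uniform in $\beta$ you propose to show that the boundary datum $\beta|\phi_\beta|^{p-2}\phi_\beta$ is uniformly controlled, starting from $\int_{\partial\Omega}|\phi_\beta|^p\,ds=O(1/\beta)$ and bootstrapping to $\|\phi_\beta\|_{L^\infty(\partial\Omega)}=o(1)$. This is quantitatively insufficient: boundedness of $\beta|\phi_\beta|^{p-1}$ on $\partial\Omega$ requires $\|\phi_\beta\|_{L^\infty(\partial\Omega)}=O(\beta^{-1/(p-1)})$, and neither $o(1)$ decay nor the rate $\beta^{-1/p}$ suggested by the trace estimate (which, raised to the power $p-1$ and multiplied by $\beta$, still grows like $\beta^{1/p}$) delivers this. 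Moreover, the natural source of the rate $\beta^{-1/(p-1)}$ is the boundary condition itself, $\beta|\phi_\beta|^{p-1}=|\nabla\phi_\beta|^{p-2}|\partial_\eta\phi_\beta|\le\|\nabla\phi_\beta\|_{\infty}^{p-1}$, which presupposes exactly the uniform gradient bound you are trying to establish --- so as written the step is circular. To keep your route you would need an independent uniform bound on the conormal derivative; the paper instead takes the position that Lieberman's theorem yields the uniform $C^{1,\alpha}$ bound from the $L^\infty$ bound on the interior data alone, in which case the entire trace-decay detour is unnecessary.
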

\begin{proof}
	First of all, we have $0<\lambda_1^\beta \leq \lambda_1^D$ for any $\beta>0$. 
	(The upper bound follows by substituting $u_D$ into the definition of $\lambda_1^\beta$.)
	Therefore, the monotonic sequence $\{\lambda_1^\beta\}$ converges to some $\lambda_0 \in [0,\lambda_1^D]$.
	
	On the other hand, since we require $\|\phi_\beta\|_{L^\infty} = 1$ for any $\beta>0$, $\lambda_1^\beta |\phi_\beta|^{p-2}\phi_\beta$
	 is uniformly bounded in $L^\infty(\Omega)$, and hence 
	we can apply \cite[Theorem 2]{lieberman1988boundary} to conclude that  $\{\|\phi_\beta\|_{C^{1,\alpha}(\overline{\Omega})}\}$ is bounded.
	Therefore, by the Arzela-Ascoli theorem, there exists some $\phi \in C^1(\overline{\Omega})$ such that $\phi_\beta \to \phi$ in $C^1(\overline{\Omega})$.
	This should also imply that $\phi_\beta \to \phi$ in $W^{1,p}(\Omega)$.
	
	Next we show that $\phi=0$ on $\partial \Omega$.
	Namely, let us show that $\int_{\partial \Omega} |\phi|^p \,ds = 0$.
	Suppose, by contradiction, that $\int_{\partial \Omega} |\phi|^p \,ds > 0$.
	In view of the convergence, there exists $C>0$ such that $\int_{\partial \Omega} |\phi_\beta|^p \,ds > C > 0$ for all sufficiently large $\beta>0$.
	Recalling that $\|\phi_\beta\|_{L^\infty} = 1$, we also have $\int_{\Omega} |\phi_\beta|^p \,dx \leq C_1$ for some $C_1>0$ and all $\beta>0$.
	However, this shows that 
	$$
	\lambda_1^\beta
	=
	\frac{\int_{\Omega} |\nabla \phi_\beta|^p \,dx + \beta \int_{\partial \Omega} |\phi_\beta|^p \,ds}{\int_{\Omega} |\phi_\beta|^p \,dx}
	\to 
	\infty,
	$$
	which is impossible.
	Therefore, $\int_{\partial \Omega} |\phi|^p \,ds = 0$ and hence $\phi=0$ on $\partial \Omega$.
	
	Note that $\phi \not\equiv 0$ in $\Omega$. This follows from the fact that $\|\phi_\beta\|_{\infty} = 1$ and $\phi_\beta \to \phi$ in $C^1(\overline{\Omega}).$ 
	So we can conclude that $\phi \in W_0^{1,p}(\Omega) \setminus \{0\}$, and by the convergence $\phi_\beta \to \phi$ in $W^{1,p}(\Omega)$ we get
	$$
	\lim_{\beta \to \infty} \lambda_1^\beta
	=
	\frac{\int_{\Omega} |\nabla \phi|^p \,dx 
		+ 
	\lim\limits_{\beta \to \infty} \left(\beta \int_{\partial \Omega} |\phi_\beta|^p \,ds\right)}{\int_{\Omega} |\phi|^p \,dx}
\leq \lambda_1^D.
	$$
	On the other hand, 
	$$
		\lim\limits_{\beta \to \infty} \left(\beta \int_{\partial \Omega} |\phi_\beta|^p \,ds\right) \geq 0,
	$$
	and hence we also have
	$$
	\lambda_1^D \leq 
	\frac{\int_{\Omega} |\nabla \phi|^p \,dx}{\int_{\Omega} |\phi|^p \,dx}
	\leq
	\liminf_{\beta \to \infty} \lambda_1^\beta.
	$$
	Thus, we conclude that $\lim\limits_{\beta \to \infty} \lambda_1^\beta = \lambda_1^D$. 
	Hence, $\phi \equiv \phi_D$, up to a multiplication.
\end{proof}
Let $u_\beta$ be the first eigenfunction of \eqref{Robin p-Laplacian} with the normalization $\int_{\Omega}  |u_\beta|^p ~dx=1$ and $u_D$ be the first eigenfunction of the Dirichlet $p$-Laplace eigenvalue problem normalized as $\int_{\Omega}  |u_D|^p ~dx=1.$ It can be shown that the convergence ($u_\beta \to u_D$ in $C^1(\overline{\Omega})$ as $\beta \to \infty$) holds true for these normalized eigenfunctions as well.

We use this convergence result to prove the persistence of  domain monotonicity  for large $\beta$.

\noindent{\bf Proof of Theorem \ref{LargeBeta}:}
Assume $ v \cdot \eta > 0.$
We have \begin{equation*}
\begin{aligned}
	     \dot{\lambda}_1^\beta(0)&=\int_{\partial \Omega} \{\left[|\nabla u_\beta|^p - \lambda_1 (0) |u_\beta|^p + \beta |u_\beta|^p (n-1)H + p \beta u_\beta \left( |u_\beta|^{p-2} \nabla u_\beta \cdot \eta \right) \right] (v \cdot \eta)  \} ~ds\\
	     &=\int_{\partial \Omega} \{\left[|\nabla u_\beta|^p - \lambda_1 (0) |u_\beta|^p + \beta |u_\beta|^p (n-1)H -p \beta  |u_\beta|^{p-1} |\nabla u_\beta \cdot \eta| \right] (v \cdot \eta)  \} ~ds \; \text{ since } \nabla u_\beta \cdot \eta < 0. 
\end{aligned}	     
	 \end{equation*}
	 From the boundary condition, we have $|u_\beta|^{p-1}=\frac{1}{\beta}|\nabla u_\beta|^{p-2}|\nabla u_\beta \cdot \eta|.$ So the previous equation can be written as
	 \begin{equation*}
	  \dot{\lambda}_1^\beta(0)=\int_{\partial \Omega} \{\left[|\nabla u_\beta|^p - \lambda_1 (0) |u_\beta|^p + \beta |u_\beta|^p (n-1)H - p |\nabla u_\beta|^{p-2}|\nabla u_\beta \cdot \eta|^2 \right] (v \cdot \eta)  \} ~ds. 
	 \end{equation*}
Since $\lambda_1^\beta \to \lambda_1^D,$ and $H$ is bounded, given $\varepsilon > 0,$ there exists $\tilde{\beta}(\Omega)$ such that 
\begin{equation}\label{beta u^p going to zero}
    \left |\int_{\partial \Omega}\beta |u_\beta|^p (n-1)H(v \cdot \eta) ~ds  \right | < \varepsilon \text{  for } \beta > \tilde{\beta}.
\end{equation}
So for $\beta > \tilde{\beta},$ \begin{equation*}
    \dot{\lambda}_1^\beta(0) < \int_{\partial \Omega} \{\left[|\nabla u_\beta|^p   - p |\nabla u_\beta|^{p-2}|\nabla u_\beta \cdot \eta|^2 \right] (v \cdot \eta)  \} ~ds + \varepsilon. 
\end{equation*}
As $u_\beta \to u_D $ in $C^1(\overline{\Omega}),$ there exists $\hat{\beta}$ such that for all $\beta > \hat{\beta},$	 
\begin{equation*}
    \begin{aligned}
    |\nabla u_\beta|^p   - p |\nabla u_\beta|^{p-2}|\nabla u_\beta \cdot \eta|^2 & < (|\nabla u_D|+\varepsilon)^p - p (|\nabla u_D|-\varepsilon)^{p-2}(|\nabla u_D \cdot \eta|-\varepsilon)^2 \\
    & =(|\nabla u_D|+\varepsilon)^p - p (|\nabla u_D|-\varepsilon)^{p}  \text{\; as } \nabla u_D \cdot \eta = \nabla u_D. 
    \end{aligned}	  
\end{equation*}
Thus for $\beta >\beta^*:= max\{\tilde{\beta},\hat{\beta}\}$ we have,
 $$\dot{\lambda}_1^\beta(0) < \int_{\partial \Omega} \{\left[(1-p)|\nabla u_D|^p +o(\varepsilon) \right] (v \cdot \eta)  \} ~ds+\varepsilon. $$
 Hence for $\epsilon$ sufficiently small there exists a $\beta^*>0$ such that for $\beta >\beta^*$
 $$\dot{\lambda}_1^\beta(0) < 0.$$

\qed

	\nocite{henrot2017shape}
	\nocite{giorgi2005monotonicity}
	\bibliographystyle{amsplain}
	\bibliography{reference}
	\medskip

\medskip
\noindent
\textbf{Ardra A}:\quad
Department of Mathematics, Indian Institute of Technology Palakkad, Kerala-678557, India, \\  
	\textit{Email}: \texttt{ardra.math@gmail.com, 211814001@smail.iitpkd.ac.in}
	
\medskip
\noindent
\textbf{Mohan Mallick}:\quad
Department of Mathematics, Visvesvaraya National Institute of Technology Nagpur, Maharashtra, 440010, India, \\  
	\textit{Email}: \texttt{mohan.math09@gmail.com, mohanmallick@mth.vnit.ac.in}

\medskip
\noindent
\textbf{Sarath Sasi}:\quad
Department of Mathematics, Indian Institute of Technology Palakkad, Kerala-678557, India, \\  
	\textit{Email}: \texttt{sarath@iitpkd.ac.in}

\section*{Acknowledgment}
The authors would like to thank Dr. Vladimir Bobkov for the convergence result given in Proposition \ref{Robin limit goes to dirichlet}.

\section*{Conflict of interest}

The authors declare that they have no conflict of interest.
\end{document}